\documentclass[11pt,a4paper]{article}
\usepackage[T1]{fontenc}
\usepackage{lmodern}
\usepackage[margin=1in]{geometry}
\usepackage{amsmath,amssymb,amsthm,amsfonts,bm,float,cite}
\newtheorem{conjecture}{Conjecture}
\newtheorem{theorem}{Theorem}[section]

\newtheorem{definition}{Definition}[section]
\newtheorem{algorithm}{Algorithm}[section]
\usepackage[center]{caption}
\usepackage{xcolor}
\definecolor{codegreen}{rgb}{0,0.6,0}
\definecolor{codegray}{rgb}{0.5,0.5,0.5}
\definecolor{codepurple}{rgb}{0.58,0,0.82}
\definecolor{backcolour}{rgb}{0.95,0.95,0.92}

\definecolor{mygray}{rgb}{0.42,0.42,0.42}
\usepackage{upquote}
\usepackage{listings}
\usepackage[hidelinks]{hyperref}
\hypersetup{
  pdftitle={Perfect numbers, Wieferich primes and the solutions of binomial (2n,n) congruent to 2 to the n modulo n},
  pdfauthor={Gabriel Guedes and Ricardo Machado},
  pdfsubject={Number theory; published in Notes on Number Theory and Discrete Mathematics 29 (2023), 705--712},
  pdfkeywords={Perfect numbers, Wieferich primes, binomial coefficient, algorithmic number theory, Wolstenholme converse problem}
}

\title{Perfect numbers, Wieferich primes and the solutions of
\texorpdfstring{$\displaystyle\binom{2n}{n}\equiv 2^n \bmod n$}
{binomial (2n,n) congruent to 2^n modulo n}}
\author{
Gabriel Guedes\thanks{Department of Mathematics, Federal Rural University of Pernambuco (UFRPE), Recife-PE, Brazil. Email: \href{mailto:gabriel.guedes@ufrpe.br}{gabriel.guedes@ufrpe.br}.}
\and
Ricardo Machado\thanks{Department of Mathematics, Federal Rural University of Pernambuco (UFRPE), Recife-PE, Brazil. Email: \href{mailto:ricardo.machadojunior@ufrpe.br}{ricardo.machadojunior@ufrpe.br}.}
}
\date{}

\begin{document}
\maketitle

\begin{abstract}
In this article we focus on the solutions of a congruence equation: ``$\binom{2n}{n}\equiv 2^n \bmod n$".
Using the main result of this article and the SageMath software, we improve largely the number of known solutions. Furthermore, we prove that some famous numbers like even perfect numbers and Wieferich primes are connected to solutions of this equation.
\end{abstract}

\noindent\textbf{Keywords:} Perfect numbers, Wieferich primes, Binomial coefficient, Algorithmic number theory, Wolstenholme converse problem.

\medskip
\noindent\textbf{2020 Mathematics Subject Classification:} 11A07 (Primary), 11A41, 11B65, 11B75, 11-04.

\medskip
\noindent\textit{Published in \emph{Notes on Number Theory and Discrete Mathematics} 29 (2023), no.~4, 705--712. DOI: \href{https://doi.org/10.7546/nntdm.2023.29.4.705-712}{10.7546/nntdm.2023.29.4.705-712}. This work is licensed under the \href{https://creativecommons.org/licenses/by/4.0/}{Creative Commons Attribution 4.0 International License}.}
\vspace{4mm}

\section{Introduction} \label{sec:Intr}

 The search for properties that characterize prime numbers is intense, given their fundamental importance. 

 A classical theorem in number theory is the Wolstenholme's Theorem.

\smallskip

\begin{theorem}[Wolstenholme]\label{Wolstenholme}
Let $p>3$ be a prime number, then $p^3\mid\binom{2p}{p}-2.$
\end{theorem}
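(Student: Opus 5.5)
We reduce Wolstenholme's theorem to a statement about harmonic-type sums modulo powers of $p$. Write
\[
\binom{2p}{p}=2\binom{2p-1}{p-1}=2\prod_{k=1}^{p-1}\frac{p+k}{k}=2\prod_{k=1}^{p-1}\Bigl(1+\frac{p}{k}\Bigr),
\]
so it suffices to show $\prod_{k=1}^{p-1}(1+p/k)\equiv 1 \pmod{p^3}$. This congruence makes sense in $\mathbb{Z}_{(p)}$, since each $k$ is invertible mod $p$; the factor $2$ is harmless because $p$ is odd. Expanding the product gives
\[
\prod_{k=1}^{p-1}\Bigl(1+\frac{p}{k}\Bigr)=1+pe_1+p^2e_2+p^3(\cdots),
\]
where $e_1=\sum_k 1/k$ and $e_2=\sum_{j<k}1/(jk)$ are elementary symmetric functions in the inverses $1/k$. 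Everything with $p^3$ or higher vanishes modulo $p^3$. Hence the goal is
\[
e_1\equiv 0 \pmod{p^2}\quad\text{and}\quad e_2\equiv 0 \pmod{p}.
\]

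\textbf{Step 1: $e_2\equiv 0 \pmod p$.} Use $2e_2=e_1^2-\sum_k 1/k^2$. Modulo $p$, the map $k\mapsto 1/k$ permutes $\{1,\dots,p-1\}$. Therefore
\[
\sum_k \frac{1}{k^2}\equiv\sum_k k^2=\frac{(p-1)p(2p-1)}{6}\equiv 0 \pmod p,
\]
using $p>3$ so that $6$ is invertible. Likewise $e_1\equiv\sum_k k\equiv 0 \pmod p$. So $2e_2\equiv 0$, hence $e_2\equiv 0 \pmod p$.

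\textbf{Step 2: $e_1\equiv 0 \pmod{p^2}$.} Pair $k$ with $p-k$:
\[
2e_1=\sum_{k=1}^{p-1}\Bigl(\frac1k+\frac1{p-k}\Bigr)=p\sum_{k=1}^{p-1}\frac{1}{k(p-k)}.
\]
It then suffices to show $\sum_k 1/(k(p-k))\equiv 0 \pmod p$. Modulo $p$, each term is congruent to $-1/k^2$. By the computation in Step 1 this sum is $\equiv 0 \pmod p$, again needing $p>3$.

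\textbf{Main obstacle.} The only delicate point is the $p^2$-divisibility of the harmonic sum in Step 2, which is where $p>3$ genuinely enters through $6\mid$ the denominator of $\sum k^2$. The pairing trick handles it cleanly. Combining $e_1\equiv0 \pmod{p^2}$ and $e_2\equiv 0 \pmod p$ gives $\binom{2p}{p}\equiv 2 \pmod{p^3}$.
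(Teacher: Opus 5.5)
The paper does not prove this statement: Wolstenholme's theorem is quoted as a classical result, with pointers to Ribenboim and Guy. It is used only to motivate the congruence $\binom{2p}{p}\equiv 2^p \pmod p$, so there is no argument of the paper's to compare against.

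Your proof is the standard elementary one, and it is correct. Writing $\binom{2p-1}{p-1}=\prod_{k=1}^{p-1}(1+p/k)$ in $\mathbb{Z}_{(p)}$ reduces the claim to $e_1\equiv 0 \pmod{p^2}$ and $e_2\equiv 0 \pmod p$. Both follow from $\sum_k k^{-2}\equiv\sum_k k^2\equiv 0 \pmod p$ together with the pairing $k\leftrightarrow p-k$. The reduction in Step 2 is justified because $\frac{1}{k(p-k)}+\frac{1}{k^2}=\frac{p}{k^2(p-k)}\in p\,\mathbb{Z}_{(p)}$.

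One small inaccuracy is in your closing remark. The hypothesis $p>3$ is not confined to Step 2; Step 1 depends on it in exactly the same way, as you yourself note there. For $p=3$ both conclusions fail: for example $e_2=1/2\not\equiv 0 \pmod 3$, and $\binom{6}{3}-2=18$ is not divisible by $27$.

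As for what each route buys: yours makes the statement self-contained and shows precisely where $p>3$ is needed. The paper, on the other hand, never uses the $p^3$ strength. For its purposes, the mod $p$ congruence $\binom{2p}{p}\equiv\binom{2}{1}\binom{0}{0}=2 \pmod p$, which is immediate from Lucas's theorem, combined with Fermat's little theorem already suffices.
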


\smallskip

A natural question is the reciprocal conclusion of the Theorem \ref{Wolstenholme}.

\smallskip

\begin{conjecture}\label{conj01}
If $n>3$ and $n^3\mid\binom{2n-1}{n-1}-1$ then $n$ is a prime number.
\end{conjecture}

The Conjecture \ref{conj01} is known as Wolstenholme's converse problem which if true will characterize the prime numbers. This conjecture was proposed by J. P. Jones and remains open. See \cite[p. 23]{Ribenboim} and \cite[p. 84]{Guy}. In \cite{McIntsh}, McIntosh verified that this holds for all $n<10^9.$

We are interested in problems related to Conjecture \ref{conj01}. Using the Theorem \ref{Wolstenholme} and Fermat's Little Theorem we arrive at the congruence equation: $\binom{2p}{p}\equiv 2^p \bmod {p} $, which is valid for primes. We can ask for solutions to the same congruence equation in the case of composite integers. That is, which are the composite integers $n$ that satisfy the following equation?
\begin{equation*}
\binom{2n}{n}\equiv 2^n \bmod n.
\end{equation*}
In \cite{oeis} sequence id:A084699 shows that the known solutions of this equation are 
\begin{align*}
12, 30, 56, 424, 992, 16256, 58288, 119984,
356992, 1194649, \quad \\
 9973504, 12327121, 13141696,
22891184, 67100672, 233850649.
\end{align*}
In the next sections, we make use of Theorem \ref{thm111} and computational effort to find some new solutions and we were able to relate the set of solutions with some famous numbers such as Mersenne primes, even perfect numbers, and Wieferich primes.

\section{Solutions and perfect numbers}
The relation of the Equation \eqref{eq-001} and the very famous class of numbers as even perfect numbers, Mersenne primes, and Wieferich primes, appears to us in a very interesting way. Our initial aim with this article was to obtain new values to the sequence id:A084699 (see \cite{oeis}). For this, we use the result proved in this article, Theorem \ref{thm111}. In particular, item \textit{ b)} enabled a more efficient implementation than the direct test on Equation \eqref{eq-001}.

In this article, we show an algorithm implemented in SageMath, in order to generate solutions to this equation. 
The algorithm is based on the following theorem:

\begin{theorem}\label{thm111}
Let $n=2^k p$, where $p$ is an odd prime and $k\in \mathbb{N}$, then $n$ is solution of the equation
\begin{equation} \label{eq-001}
\binom{2n}{n} \equiv 2^n \bmod n
\end{equation}
if and only if $p$ satisfies the following conditions:
\begin{itemize}
    \item [a)] $p$ divides $\binom{2^{k+1}}{2^k}-2^{2^k}$;
    \item [b)] $p$ has at least $k$ digits $1$'s in its binary expansion.
\end{itemize}
\end{theorem}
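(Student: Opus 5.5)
The plan is to use the Chinese Remainder Theorem. Since $n=2^k p$ with $p$ odd, $\gcd(2^k,p)=1$, so \eqref{eq-001} holds if and only if it holds both modulo $p$ and modulo $2^k$. I will show that the congruence modulo $p$ is equivalent to condition a), and the congruence modulo $2^k$ is equivalent to condition b).

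\emph{Modulo $p$.} I would use Lucas' theorem, or more precisely its standard consequence $\binom{ap}{bp}\equiv\binom{a}{b} \bmod p$. Write $2n=2^{k+1}p$ and $n=2^k p$. In base $p$, these numbers are the base-$p$ expansions of $2^{k+1}$ and $2^k$ shifted one place, with a trailing digit $0$. Lucas' theorem then gives
\begin{equation*}
\binom{2n}{n}\equiv\binom{2^{k+1}}{2^k}\binom{0}{0}=\binom{2^{k+1}}{2^k} \bmod p .
\end{equation*}
On the other side, Fermat's Little Theorem gives $2^p\equiv 2 \bmod p$, hence $2^n=(2^p)^{2^k}\equiv 2^{2^k} \bmod p$. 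So \eqref{eq-001} modulo $p$ is exactly the statement that $p$ divides $\binom{2^{k+1}}{2^k}-2^{2^k}$, which is condition a).

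\emph{Modulo $2^k$.} Since $n=2^k p\ge k$, we have $2^k\mid 2^n$. So \eqref{eq-001} modulo $2^k$ is equivalent to $2^k\mid\binom{2n}{n}$, that is, $\nu_2\bigl(\binom{2n}{n}\bigr)\ge k$. By Kummer's theorem, $\nu_2\bigl(\binom{2n}{n}\bigr)$ equals the number of carries when $n+n$ is added in base $2$. Doubling a binary number produces exactly one carry out of each position that holds a $1$. Hence this valuation equals $s_2(n)$, the number of $1$'s in the binary expansion of $n$; equivalently, this is Legendre's formula $\nu_2\bigl(\binom{2n}{n}\bigr)=2s_2(n)-s_2(2n)=s_2(n)$. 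Multiplying by $2^k$ only appends zeros, so $s_2(n)=s_2(p)$. The condition therefore becomes $s_2(p)\ge k$, which is condition b). The case $k=0$ is vacuous here and is consistent with the statement.

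Combining the two equivalences via the Chinese Remainder Theorem proves the theorem. I do not expect a serious obstacle. The only steps needing care are applying Lucas' theorem correctly when $2^{k+1}$ may exceed $p$, which the base-$p$ shift argument handles uniformly, and justifying that the carry count for $n+n$ equals $s_2(n)$.
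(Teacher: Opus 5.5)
Your proposal is correct and follows essentially the same route as the paper. You both split the congruence via $\gcd(2^k,p)=1$ into a condition modulo $p$, which reduces to a) using $\binom{ap}{bp}\equiv\binom{a}{b} \bmod p$ and Fermat's Little Theorem, and a condition modulo $2^k$, which reduces to b) via Kummer's theorem giving $\nu_2\bigl(\binom{2n}{n}\bigr)=s_2(p)$; the only differences are that you derive the mod-$p$ step from Lucas' theorem where the paper cites Babbage's theorem, and you spell out the carry-count justification that the paper leaves implicit.
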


\begin{proof}
Since $\gcd(2^k, p)=1$, the Equation~\eqref{eq-001} is equivalent to the system:
\begin{align}
\binom{2n}{n}&\equiv 2^{n} \bmod {p} \label{eq001.1},  \\ 
\binom{2n}{n}&\equiv 2^n \bmod {2^k}\label{eq002}.
\end{align}

Using the left-hand side of Equation \eqref{eq001.1}, by Babbage's Theorem (see page 68, Exercise 2.5.10 of \cite{GRANVILLE}), we have
$$\binom{2n}{n}\equiv \binom{2\cdot 2^k p}{2^k p}\equiv \binom{2^{k+1}}{2^k} \bmod {p}. $$  
Using the right-hand side of Equation \eqref{eq001.1}, by Fermat's Little Theorem,
$$2^n\equiv \left(2^{2^k}\right)^p\equiv 2^{2^k} \bmod {p}.$$
Consequently, the Equation \eqref{eq001.1} is equivalent to
$$\binom{2^{k+1}}{2^k} \equiv 2^{2^k} \bmod {p}, $$
which is equivalent to item  \textit{a)}.  
Now, using the right-hand side of Equation \eqref{eq002}, since $n=2^k p$, \linebreak it is clear that $2^n\equiv 0 \bmod {2^k}$.
Let $v_p(n)$ the $p$-adic valuation of $n$.
Since $v_2\left(\binom{2n}{n}\right)=v_2\left(\binom{2p}{p}\right)$, by Kummer's Theorem  (see \cite{GRANVILLE}, p. 99, Theorem 3.7), $v_2\left(\binom{2n}{n}\right)$ is the number of $1$'s in binary expansion of $p$. Then the Equation \eqref{eq002} is satisfied if and only if item \textit{b)} is satisfied, i.e., $p$ has at least $k$ digits $1$'s in binary expansion.
\end{proof}

In addition to the solutions available at \cite{oeis} (sequence id:A084699),
using Theorem \ref{thm111} and some implementation on SageMath, we discovered two more numbers: 
$$2^{17} \times 131071 = 17179738112 \quad \text{ and } \quad 2^{19} \times 524287 = 274877382656.$$ 
The algorithm code follows:
\begin{algorithm}\label{algo01} \rm \footnotesize
\mbox{ }
\begin{lstlisting}
def algorithm_theo2(start, end):
    '''spbin is the sum of the digits of p in base 2'''
    p = start
    while (p<=end):
        p = next_prime(p)
        spbin = sum(p.digits(base=2)) 
        for k in range(1,spbin+1):
            if (binomial(2^(k+1),2^(k))-2^(2^k)).mod(p)==0:
                print('2^%d x %d = %d' %(k, p, (2^k)*p))
algorithm_theo2(2, 10^6)
\end{lstlisting}
\end{algorithm}
The Algorithm \ref{algo01} searches for primes $p$ such that 
$$ \binom{2^{k+1}}{2^k}-2^{2^k} \equiv 0 \bmod p, $$
in which $k$ varies from $1$ to the sum of the digits of $p$ in base $2$. In all computational tests, we used a PC with a Core i5 1135G7 processor and this routine took 55 seconds to compute.

Factoring the solutions found, we verify that the Mersenne primes are related to a solution in the way of the next theorem.

\begin{theorem} \label{cor-perfect-number}
If $m$ is an even perfect number, then $n=2m$ satisfies the Equation \eqref{eq-001}.
\end{theorem}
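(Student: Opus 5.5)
The plan is to reduce the statement to Theorem \ref{thm111} via the Euclid--Euler description of even perfect numbers. By that classical theorem, an even perfect number has the form $m=2^{q-1}(2^q-1)$, where $p=2^q-1$ is a Mersenne prime; in particular $q$ is prime and $q\ge 2$. Then
$$n=2m=2^q p.$$
Here $p$ is an odd prime, so $n$ has exactly the shape required in Theorem \ref{thm111} with $k=q$. It remains to verify items \textit{a)} and \textit{b)} of that theorem for this $p$ and $k$.

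Item \textit{b)} is immediate. The binary expansion of $p=2^q-1$ consists of exactly $q=k$ ones, so $p$ has at least $k$ ones.

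For item \textit{a)} we must show $\binom{2^{q+1}}{2^q}\equiv 2^{2^q} \pmod p$, and we compute each side modulo $p$ separately.

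\emph{The power of two.} Since $2^q\equiv 1 \pmod p$, the exponent of $2^{2^q}$ can be reduced modulo $q$. By Fermat's Little Theorem for the prime $q$ we have $2^q\equiv 2 \pmod q$. Writing $2^q = q t + 2$ for some integer $t$, we get
$$2^{2^q} = (2^q)^t\cdot 2^2 \equiv 4 \pmod p.$$

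\emph{The binomial coefficient.} Write $2^q=p+1$ and $2^{q+1}=2p+2$. For $p\ge 3$, the base-$p$ digits of $2p+2$ are $(2,2)$ and those of $p+1$ are $(1,1)$. Lucas' theorem then gives
$$\binom{2p+2}{p+1}\equiv \binom{2}{1}\binom{2}{1}=4 \pmod p.$$
Both sides are therefore congruent to $4$ modulo $p$, which is item \textit{a)}, and Theorem \ref{thm111} shows that $n$ satisfies Equation \eqref{eq-001}.

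The only step needing care is the base-$p$ digit computation in the Lucas step, in particular the smallest case $p=3$. There the digit $2$ is still less than $p$, so Lucas applies unchanged, and the case $m=6$, $n=12$ also agrees with the list of known solutions. If one prefers to avoid Lucas' theorem, an alternative is to expand $\binom{2p+2}{p+1}$ directly as $\frac{(2p+2)(2p+1)}{(p+1)^2}\binom{2p}{p}$ and use $\binom{2p}{p}\equiv 2 \pmod p$.
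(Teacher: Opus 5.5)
Your proof is correct and takes essentially the same route as the paper: both reduce to the conditions of Theorem~\ref{thm111}, where your count of $q$ ones in $2^q-1$ is exactly the paper's Kummer computation for the $2$-part, and both get $4$ from Lucas with base-$p$ digits $(2,2)$ and $(1,1)$ and $2^{2^q}\equiv 4$ from Fermat. The differences are cosmetic: the paper writes $2^{2^q}=2\cdot 2^{p}\equiv 4 \pmod p$ using Fermat for the Mersenne prime itself rather than reducing the exponent modulo $q$, and it treats $m=6$ as a separate case, whereas you handle it uniformly.
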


\begin{proof}
If $m$ is an even perfect number, then $m=2^{p-1}\cdot q$ with $p$ and $q=2^p-1$ prime numbers. Like the proof of Theorem \ref{thm111}, Equation \eqref{eq-001} is equivalent to the system
\begin{align}
\binom{2^{p+1}}{2^{p}}&\equiv 2^{2^p} \bmod {q}  \label{eq5}, \\ 
\binom{2q}{q}&\equiv 2^q \bmod {2^p}\label{eq6}.
\end{align}
Now, we consider two cases. \\
Case 1: If $p=2$, the congruence is immediate.

\noindent
Case 2: If $p>2$, the $q$-adic expansion of $2^p=2^p-1+1=q+1$ and $2^{p+1}=2q+2$ by Lucas's Theorem (see \cite{CHENCHUAN}, page 95) the left-hand side of Equation \eqref{eq5}
\begin{equation*}
    \binom{2^{p+1}}{2^{p}}\equiv \binom{2q+2}{q+1}\equiv \binom{2}{1}\cdot\binom{2}{1}\equiv 4 \bmod {q}. 
\end{equation*}
Now look to the right-hand side and applying Fermat's Little Theorem
\begin{equation*}
 2^{2^p}=2\cdot 2^{2^p-1}=2\cdot2^q\equiv4 \bmod {q}    
 \end{equation*}
and this verifies the Equation \eqref{eq5}.

In Equation \eqref{eq6}, applying Kummer's Theorem for the left-hand side  we have $v_2\left(\binom{2q}{q}\right)=p$, then $2^p  \mid   \binom{2q}{q}$. For the right-hand side, $p<q\Rightarrow2^p \mid 2^q$. Hence $2^p$ divides both sides of the Equation~\eqref{eq6}.
\end{proof}

\subsection{Using Pollard's Rho algorithm for prime factorization}

Pollard's Rho (see \cite{CORMEN}) algorithm for prime factorization is particularly fast for a large composite number with small prime factors.   Algorithm \ref{algo02} uses Pollard’s Rho algorithm to search for some prime factors of $\binom{2^{k+1}}{2^k}-2^{2^k}$ and proceeds like Algorithm \ref{algo01} to verify the second condition of Theorem \ref{thm111}. Using Algorithm \ref{algo02}, we found 8 previously unknown solutions to Equation \eqref{eq-001}.

\begin{algorithm}\label{algo02} \rm \footnotesize
\mbox{ }
\begin{lstlisting} 
def algorithm_theo2_pollard_rho(start, end, pmax):
    '''spbin is the sum of the digits of p in base 2'''
    k = start
    while (k<=end):
        rk = (binomial(2^(k+1),2^(k))-2^(2^k))
        prime_list=partial_factoring(rk, pmax)
        for p in prime_list:
            if rk%p ==0:
                spbin = sum(ZZ(p).digits(base=2)) 
                if spbin >= k:
                    print('2^%d x %d = %d' %(k, p, (2^k)*p))
        k=k+1
\end{lstlisting} \normalsize
\end{algorithm}
Line 6 of Algorithm \ref{algo02} calls the function \verb|partial_factoring| that can be found in Appendix \ref{append}. This function uses the method \verb|is_prime| which employs a strong pseudo-primality test. 
Using the Algorithm \ref{algo02}, we found the following previously unknown solutions to Equation~\eqref{eq-001}:
\begin{enumerate}
\setlength\itemsep{-0.1em} 
    \item $2^5 \times 29558453816897149,$
    \item $2^6 \times 52917841,$
    \item $2^6 \times 41811313718690881087,$
    \item $2^7 \times 326653838319686945891577906833524101155696807553436546433716230841,$
    \item $2^8 \times 129020293739,$
    \item $2^9 \times 1466765681,$
    \item $2^{10} \times 412707874957531,$
    \item $2^{11} \times 559115197117.$
\end{enumerate}
To find the first 6 solutions we used parameters \verb|start=3|, \verb|end=9| and \verb|pmax=10^8|. That is, we used the command: \verb|algorithm_theo2_pollard_rho(3, 9, 10^8)|.

It was necessary to use 2.1 seconds to get the solutions. 
To find the last two solutions required 3 minutes and 33 seconds. The parameters were: \verb|start=10|, \verb|end=11| and \verb|pmax=10^4|.

\section{A generalization and Wieferich primes}

We can observe that it is possible to generalize Theorem \ref{thm111} for any positive integer, in the next Theorem \ref{thm444}. It is noteworthy that using this theorem, we show that the Wieferich prime numbers are uniquely determined by Equation~\eqref{eq-001}.

\begin{theorem}\label{thm444}
Let $n=2^k p_1\cdots p_s$, where $p_j$ are different prime numbers and $n_j=\frac{n}{p_j}$, then $n$ is solution of the equation
\begin{equation}
\binom{2n}{n} \equiv 2^n \bmod n 
\end{equation}
if and only if the following conditions  
are satisfied:
\begin{itemize}
    \item [a)] $p_j$ divides $\binom{2n_j}{n_j}-2^{n_j}$;
    \item [b)] $p_1\cdots p_s$ has at least $k$ digits $1$'s in its binary expansion.
\end{itemize}
\end{theorem}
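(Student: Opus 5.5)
The plan is to imitate the proof of Theorem~\ref{thm111} and apply the Chinese Remainder Theorem to the pairwise coprime moduli $2^k, p_1, \dots, p_s$. I read the statement with the $p_j$ odd, since the power of $2$ is already collected in $2^k$. Because the $p_j$ are distinct, $n = 2^k p_1\cdots p_s$ is a product of pairwise coprime factors. So Equation~\eqref{eq-001} is equivalent to the system
\begin{align*}
\binom{2n}{n} &\equiv 2^n \bmod p_j \quad (j=1,\dots,s),\\
\binom{2n}{n} &\equiv 2^n \bmod 2^k .
\end{align*}
I will show that the $j$-th odd congruence is equivalent to item a) for $p_j$, and that the congruence modulo $2^k$ is equivalent to item b).

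For the odd prime $p_j$, write $n = p_j n_j$ and $2n = p_j (2n_j)$. On the left-hand side I would use Lucas's Theorem, or equivalently Babbage's congruence $\binom{pa}{pb}\equiv\binom{a}{b} \bmod p$, as in Theorem~\ref{thm111}. In base $p_j$, the numbers $2n$ and $n$ both have last digit $0$, and their remaining digits are those of $2n_j$ and $n_j$. Hence
\[
\binom{2n}{n}\equiv \binom{0}{0}\binom{2n_j}{n_j}=\binom{2n_j}{n_j}\bmod p_j .
\]
This holds whether or not $p_j$ divides $n_j$; since the primes are distinct it does not anyway. On the right-hand side, Fermat's Little Theorem gives $2^n = \bigl(2^{n_j}\bigr)^{p_j}\equiv 2^{n_j} \bmod p_j$. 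So the congruence modulo $p_j$ is equivalent to $p_j \mid \binom{2n_j}{n_j}-2^{n_j}$, which is item a).

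For the modulus $2^k$, the right-hand side satisfies $2^n\equiv 0 \bmod 2^k$, because $n\ge 2^k\ge k$. So this congruence holds if and only if $2^k \mid \binom{2n}{n}$, i.e.\ $v_2\bigl(\binom{2n}{n}\bigr)\ge k$. By Kummer's Theorem, $v_2\bigl(\binom{2n}{n}\bigr)$ is the number of carries when $n$ is added to itself in base $2$. Each $1$ digit produces exactly one carry, so this equals the number of $1$'s in the binary expansion of $n$. Multiplying by $2^k$ only appends zeros, so this count equals the number of $1$'s in the binary expansion of $p_1\cdots p_s$, and we obtain item b). When $k=0$ the condition is vacuous, which is consistent with the statement.

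I expect no step to be a real obstacle. The only point needing care is the Lucas/Babbage reduction modulo $p_j$ when $n_j$ itself has several base-$p_j$ digits. Lucas's Theorem handles this cleanly, because the full digit strings of $2n$ and $n$ are just those of $2n_j$ and $n_j$ shifted by one place. Combining the $s+1$ equivalences via the Chinese Remainder Theorem then proves the theorem.
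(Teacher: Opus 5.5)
Your proof is correct and follows the paper's route; the paper only says that the argument of Theorem~\ref{thm111} carries over. That argument is exactly your Chinese Remainder split into the moduli $p_j$ and $2^k$, with Lucas/Babbage and Fermat modulo each $p_j$ and Kummer's theorem modulo $2^k$, and you supply the details the paper leaves implicit: that the $p_j$ must be odd for the split, and that $v_2\bigl(\binom{2n}{n}\bigr)$ equals the number of binary $1$'s of $n$, hence of $p_1\cdots p_s$.
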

\begin{proof}
Is the same as in Theorem \ref{thm111} applied to this more general framework.
\end{proof}

\begin{definition}
A {\em Wieferich prime} is a prime number $p$ such that $p^2$ divides $2^{p-1}-1$.
\end{definition}

\smallskip

The Wieferich primes were defined in 1909  by Arthur Wieferich in his studies of Fermat's Last Theorem.  There is a lot of connection with this class of primes and important topics in number theory like {\it ABC} conjecture, pseudoprimes,  Mersenne primes, and others. Until today, in March of 2023, there are only two known Wieferich primes, $1093$ and $3511$.
\begin{theorem}\label{teo-32}
Let $n=p^2$ with $p$ a prime number. We have that
$p$ is a Wieferich prime if and only if  $n$ is a solution of Equation~\eqref{eq-001}.
\end{theorem}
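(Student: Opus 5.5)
The plan is to compute both sides of Equation~\eqref{eq-001} modulo $n=p^2$ directly and reduce the congruence to $2^{p}\equiv 2 \bmod p^2$. The small prime $p=2$ is handled by hand: $n=4$, $\binom{8}{4}=70\equiv 2 \bmod 4$, while $2^4\equiv 0 \bmod 4$. So $4$ is not a solution, and $2$ is not a Wieferich prime since $4\nmid 2^{1}-1$. Both sides of the equivalence are therefore false. From now on $p$ is odd.

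\textbf{Left-hand side.} I would use Babbage's Theorem in its mod $p^2$ form, as already invoked in the proof of Theorem~\ref{thm111}: for a prime $p\ge 3$,
$$\binom{ap}{bp}\equiv\binom{a}{b} \bmod p^2 .$$
Apply it twice. First take $a=2p$, $b=p$, which gives $\binom{2p^2}{p^2}\equiv\binom{2p}{p} \bmod p^2$. Then take $a=2$, $b=1$, which gives $\binom{2p}{p}\equiv\binom{2}{1}=2 \bmod p^2$. Hence
$$\binom{2n}{n}\equiv 2 \bmod p^2 .$$
For $p>3$ the second step also follows from Theorem~\ref{Wolstenholme}, but Babbage's Theorem covers $p=3$ as well. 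The main obstacle is making sure the cited form of Babbage's Theorem really holds modulo $p^2$ for all odd $p$, not just modulo $p$. If that form is doubtful, I would verify $p=3$ numerically: $\binom{18}{9}=48620\equiv 2 \bmod 9$. For $p>3$ I would fall back on the classical Jacobsthal--Wolstenholme congruence $\binom{ap}{bp}\equiv\binom{a}{b} \bmod p^3$.

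\textbf{Right-hand side.} By Euler's theorem modulo $p^2$, $2^{p(p-1)}\equiv 1 \bmod p^2$, because $\varphi(p^2)=p(p-1)$ and $p$ is odd. Since $p^2=p(p-1)+p$,
$$2^{p^2}=2^{p(p-1)}\cdot 2^{p}\equiv 2^{p} \bmod p^2 .$$

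\textbf{Conclusion.} Equation~\eqref{eq-001} with $n=p^2$ is therefore equivalent to $2^{p}\equiv 2 \bmod p^2$. Because $p$ is odd, $2$ is invertible modulo $p^2$, so we may cancel a factor of $2$. This shows the condition is equivalent to $2^{p-1}\equiv 1 \bmod p^2$, which is precisely the definition of a Wieferich prime. Both implications of Theorem~\ref{teo-32} follow at once from this chain of equivalences.
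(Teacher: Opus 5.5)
Your argument is correct and follows the same route as the paper: you reduce the left side to $2$ and the right side to $2^p$ modulo $p^2$ (via Euler's theorem), then cancel the factor $2$ to reach the Wieferich condition. Your version is in fact slightly more careful than the paper's, for two reasons: you justify $\binom{2p^2}{p^2}\equiv 2 \bmod p^2$ with the mod-$p^2$ Babbage congruence, whereas the paper cites Lucas's Theorem, which by itself only gives this modulo $p$; and you handle $p=2$ separately, where Euler's theorem and the cancellation of $2$ are not available and which the paper does not treat.
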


\begin{proof}
Supposing that $p$ is a Wieferich prime we are going to prove that $n$ is a solution of Equation \eqref{eq-001}, that is,
\begin{equation}\label{eq8}
\binom{2p^2}{p^2}\equiv2^{p^2} \bmod {p^2}.
\end{equation}
Using Lucas's Theorem  we have that left-hand side of Equation \eqref{eq8} is
$$\binom{2p^2}{p^2}\equiv\binom{2}{1}\equiv 2 \bmod {p^2}.$$ In the right-hand side of \eqref{eq8} note that
$2^{p^2}=2^{p(p-1)}\cdot2^p$,
by Euler's Theorem 
$2^{p(p-1)}\equiv 1 \bmod {p^2}$
then 
$$2^{p^2}\equiv2^{p(p-1)}\cdot2^p\equiv2^p\equiv2^p -2 +2\equiv 2\cdot(2^{p-1}-1) +2 \bmod {p^2}.$$
Now apply the hypothesis of $p$ being a Wieferich prime then
$$2\cdot(2^{p-1}-1) +2\equiv 2 \bmod {p^2.}$$
Thus, we show that Equation \eqref{eq8} is valid, since both sides are congruent to $2$ modulo $p^2$.

For the converse in the same lines we have that
$2^p\equiv 2 \bmod p^2$
simplifying by $2$ we obtain
$2^{p-1}\equiv 1 \bmod {p^2}$
which 
is the definition of a Wieferich prime.
\end{proof}

One natural question arose here: \textit{How many of the known solutions of Equation \eqref{eq-001} are explained by the theorems of this article?} In Table \ref{tab01} we can see that there is only one previously known solution ($n=233850649$) that cannot be explained by the theorems of this article. All the new solutions found in this article are of the form $2^kp$, where $p$ is a prime number. Hence they are explained by Theorem~\ref{thm111}.
\begin{table}[H]
\centering
\begin{tabular}{ |p{2cm}|p{3cm}|p{1cm}|p{3cm}|p{2.5cm}|  }
\hline \hline
\textbf{Solution} & \textbf{Solution factorization} & \bm{$k$} & \textbf{Ones in bin. exp. of} \bm{$p_1\cdots p_s$}  & \textbf{Theorem} \\
\hline \hline
$12$ & $2^2 \cdot 3$ & $2$ & $2$ & Theorem 2.1 \\
$30$ & $ 2 \cdot 3 \cdot 5$ & $1$ & $4$ & Theorem 3.1 \\
$56$ & $2^3 \cdot 7$ & $3$ & $3$ & Theorem 2.1 \\
$424$ & $2^3 \cdot 53$ & $3$ & $4$ & Theorem 2.1 \\
$992$ & $2^5 \cdot 31$ &$5$ & $5$ & Theorem 2.1 \\
$16256$ & $2^7 \cdot 127$ &$7$ & $7$ & Theorem 2.1 \\
$58288$ & $2^4 \cdot 3643$ & $4$ & $8$& Theorem 2.1 \\
$119984$ & $2^4 \cdot 7499$ & $4$ & $8$ & Theorem 2.1 \\
$356992$ & $2^7 \cdot 2789$ & $7$ & $7$ & Theorem 2.1 \\
$1194649$ & $1093^2$ &$-$&$-$& Theorem 3.2 \\
$9973504$ & $2^8 \cdot 38959$ & $8$ & $8$ & Theorem 2.1 \\
$12327121$ & $3511^2$ &$-$&$-$& Theorem 3.2 \\
$13141696$ & $2^6 \cdot 205339$ & $6$ & $8$ & Theorem 2.1 \\
$22891184$ & $2^4 \cdot 607 \cdot 2357$ & $4$ & $12$ & Theorem 3.1 \\
$67100672$ & $2^{13} \cdot 8191$ & $13$ & $13$ & Theorem 2.1 \\
$233850649$ & $3919 \cdot 59671$ &$-$&$-$& ? \\
\hline \hline
\end{tabular}
\caption{Prior known solutions of \eqref{eq-001} explained by
Theorems \ref{thm111} and \ref{teo-32}.}\label{tab01}
\end{table}

\section{Conclusion}
In this article, we describe algorithms that facilitate the search for solutions to Equation \eqref{eq-001}, and with it, we were able to find new solutions. Furthermore we related the set of solutions with even perfect numbers and Wieferich primes. Much remains to be done, we describe solutions of the form $n=2^kp_1p_2\cdots p_r$ or $n=p^2$. However, there is a lot to wonder about solutions with exponents greater than 1.

\section{Appendix} \label{append}

In this Appendix we have the functions that are called, directly or indirectly, by Algorithm \ref{algo02}. 

\begin{algorithm}\label{algo03} \rm \footnotesize
\mbox{ }
\begin{lstlisting}
def pollard_rho_ref(n):
    i=1
    x=randint(0, n-1)
    y=x
    k=2
    while True:
        i=i+1
        x=(x^2-1)%n
        d=gcd(y-x, n)
        if d!= 1 and d!=n:
            if d.is_prime():
                return d
            else:
                f = list(d.factor())[0][0]
                return f
        elif i==k:
            y=x
            k=2*k
\end{lstlisting}
\end{algorithm}
The Algorithm \ref{algo03} is a SageMath adaptation of the pseudocode of page 976 of \cite{CORMEN}. The differences here are in lines 11 to 15. The original pseudocode prints the value of $d$, while the Algorithm \ref{algo03} returns a prime that divides $d$.
 
\begin{algorithm}\label{algo004} \rm \footnotesize
\mbox{ }
\begin{lstlisting}
def partial_factoring(n, max=oo):
    prime_list=[]
    while n!=1:
        if n.is_prime():
            prime_list.append(n)
            return prime_list
        else:
            p=pollard_rho_ref(n)
            while n%p==0:
                n=ZZ(n/p)
            prime_list.append(p)
            if p>=max:
                return prime_list
\end{lstlisting}
 \end{algorithm}
The Algorithm \ref{algo004} tries to create a divisor list of \verb|n|, it has the inputs \verb|n| and \verb|max|. This algorithm stops adding the primes to the list when it finds all the primes of \verb|n| or when it finds a prime number greater than the input \verb|max|. This algorithm executes Algorithm \ref{algo03} to search for primes, divides \verb|n| by the prime number found and runs the Algorithm~\ref{algo03} again with the new value.

\section*{Acknowledgements} 

We would like to thank Paulo Ribenboim, for his books, his lecture, and his entire life dedicated to mathematics. To the Department of Mathematics of the Federal Rural University of Pernambuco - UFRPE, for the excellent work environment, with the appreciation of the broad spectrum that
academic life can have. 

Also, we would like to express our gratitude to the editors and anonymous referees for their feedback including the suggestion to use Pollard’s Rho Algorithm to improve our algorithm and find more solutions to Equation \eqref{eq-001}. Our sincere thanks.

\makeatletter
\renewcommand{\@biblabel}[1]{[#1]\hfill}
\makeatother

\end{document}